
\documentclass[11pt]{article}
\usepackage{amsfonts,amsmath,amssymb,amsthm, amscd}
\usepackage[dvips]{epsfig,graphics}

\numberwithin{equation}{section}

\newtheorem{theorem}{Theorem}[section]
\newtheorem{prop}[theorem]{Proposition}

\theoremstyle{definition}
\newtheorem{definition}[theorem]{Definition}
\newtheorem{example}[theorem]{Example}
\newtheorem{remark}[theorem]{Remark}

\def\<{{\langle}}
\def\>{{\rangle}}

\def\a{{\alpha}}

\def\e{{\epsilon}}
\def\g{{\gamma}}

\def\a{\alpha}

\def\L{{\Lambda}}

\def\e{\epsilon}
\def\A{{\cal A}}

\def\Z{\mathbb Z}
\def\C{\mathbb C}

\def\A{{\cal A}}

\begin{document}

\title{Alexander-Lin twisted polynomials}

\author{Daniel S. Silver \and Susan G. Williams\thanks{Both authors partially supported by NSF grant
DMS-0706798.} \\ {\em
{\small Department of Mathematics and Statistics, University of South Alabama}}}

\maketitle 

\begin{abstract} X.S. Lin's original definition of twisted Alexander knot polynomial is generalized for arbitrary finitely presented groups. J. Cha's fibering obstruction theorem is  generalized. The group of a nontrivial virtual knot shown by L. Kauffman to have trivial Jones polynomial is seen also to have a faithful representation that yields a trivial twisted Alexander polynomial. \end{abstract}

\section{Introduction.}  Twisted Alexander polynomials were introduced by X.S. Lin in a 1990 Columbia University preprint \cite{lin01}. Lin's invariant, defined in terms of Seifert surfaces, incorporated information from linear representations of the knot group $\pi$. While the classical Alexander polynomial is an invariant of the $\Z[t^{\pm 1}]$-module  $\pi'/\pi''$, its twisted counterpart can incorporate information from any term $\pi^{(k)}$ of the derived series of $\pi'$. As a result, it is often a more sensitive invariant. 

In \cite{wada94}, M. Wada modified Lin's original polynomial. He developed a general invariant for any finitely presented group provided with an epimorphism to a free abelian group and also a linear representation. The definition is given below. 

When applied to knot groups, Wada's invariant is often called the  ``twisted Alexander polynomial" (Twisted Reidemeister torsion is a more appropriate label. See \cite{kit96}.) A few years later, P. Kirk and C. Livingston, working from an algebraic topological perspective, offered another candidate for the title (see \cite{kl99}). 

We return to Lin's original invariant, putting it in Wada's general context. It is a ``pointed invariant," depending on a group and distinguished conjugacy class of elements. We generalize a fibering obstruction theorem of J. Cha. We also give an example of a virtual knot group and nonabelian faithful representation with trivial 
twisted invariants. 

For background on twisted Alexander invariants, see the comprehensive survey paper of S. Friedl and S. Vidussi \cite{fv09}.

\section{Alexander-Lin polynomials.}  

For any positive integer $d$, we regard $\Z^d$ as a multiplicative group $\<t_1, \ldots, t_d \mid t_it_j = t_j t_i\ (1 \le i, j \le d)\>$. 
The ring $\Z[\Z^d]$ of Laurent polynomials in variables $t_1, \ldots, t_d$ will be denoted by $\L$. When $d=1$, we write $t$ instead of $t_1$.

\begin{definition} \label{augmented group system} An {\it augmented group system}  is a triple $\A=(G, \e, x)$ consisting of a finitely presented group $G$, epimorphism $\e: G \to \Z^d$, and conjugacy class of an element $x \in G$ such that $\e(x)$ is nontrivial.  \end{definition}

Augmented group systems are objects of a category. A  morphism from $\A= (G, \e, x)$ to $\bar \A= (\bar G, \bar \e, \bar x)$ is defined to be a group homomorphism $f: G \to \bar G$ such that $f(x) = \bar x$ and $\e = \bar \e f.$ When $f$ can be made an isomorphism, we regard $\A$ and $\bar A$ as the same.  If $f$ can be made an epimorphism, we we write $\A \twoheadrightarrow \bar \A$. 

\begin{remark} If we restrict ourselves to groups that are hopfian (that is, every epimorphism from the group to itself is an automorphism), then the relation $\twoheadrightarrow$ is a partial order on augmented group systems (see \cite{sw06}). Knot and link groups are hopfian, as are any finitely generated, residually finite groups. 
 
Augmented group systems were introduced in \cite{silver93}. Definition \ref{augmented group system} is a mild generalization. 

\end{remark}

Let $\A=(G, \e, x)$ be an augmented group system, and let $\<x_0, x_1, \ldots, x_n \mid r_1, \ldots, r_m\>$ be a presentation of $G$. Without loss of generality we can assume that $x= x_0$ and also $m \ge n$.

{\sl Throughout, $R$ is assumed to be a Noetherian unique factorization domain. } 

Given a linear representation 
$\g: G \to {\rm GL}_NR$, mapping $x_i \mapsto X_i$, we define a representation $\g\otimes \e: \Z[G] \to M_N\L$ by sending $x_i \mapsto \e(x_i)X_i$, and extending. Here $M_N\L$ denotes  the ring of $N\times N$ matrices with entries in $\L$. Pre-compose with the natural projection $F=\<x_0, x_1, \ldots, x_n \mid \> \to G$ to obtain a homomorphism 
$$\Phi: \Z[F] \to M_N\L.$$ Let ${\cal M}_\g$ be the $m \times n$ matrix with $i, j$th entry equal to the image under $\Phi$ of the Fox partial derivative ${\partial r_i}/ {\partial x_j}$, denoted by
$$\bigg( {{\partial r_i}\over {\partial x_j}}\bigg)^\Phi \in M_N\L,$$
where $i=1, \ldots, n$ and $j=1, \ldots, m.$ Regard ${\cal M}_\g$ as an $mN \times nN$ matrix over $\L$ by ignoring inner parentheses.

\begin{definition} The {\it Alexander-Lin polynomial} $D_\g(\A)$ of 
$(G, \e, x)$ is  the greatest common divisor of all $nN \times nN$ minors  of ${\cal M}_\g$. \end{definition}

{\it A priori}, $D_\g(\A)$ depends on the presentation that we used. However, it is a direct consequence of \cite{wada94} that up to multiplication by a unit $\pm t_1^{n_1}\cdots t_d^{n_d}\in \L$, the polynomial $D_\g(\A)$ depends only on $\A$. (Furthermore, the sign $\pm$ is $+$ if $N$ is even.) Such invariance was proven in \cite{wada94} for the quotient $$W_\g=\frac{D_\g(\A)}{\det (I - x)^\Phi},$$
now known as {\it Wada's invariant}.  Since the denominator depends only on the conjugacy class of $x$, invariance holds for $D_\g(\A)$ as well.

In a similar way, the divisibility results of \cite{ksw05} proved for Wada's invariant give the following.

\begin{prop} Let $\A= (G, \e, x)$ and $\bar \A = (\bar G, \bar \e, \bar x)$ be augmented group systems such that $\A \twoheadrightarrow \A$ via $f: G \to \bar G$. Then for any representation $\g: \bar G \to {\rm GL}_NR$, 
$D_{\g f}(\A)$ is divisible by $D_\g(\bar \A)$. 
\end{prop}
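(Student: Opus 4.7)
The plan is to piggyback on the parallel divisibility theorem for Wada's invariant proven in \cite{ksw05}, using the fact that the multiplier relating the Alexander-Lin polynomial to Wada's invariant is preserved under the morphism $f$.

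First I would verify that the two denominators agree. Writing $\Phi$ for the map built from $(G,\e,\g f)$ and $\bar\Phi$ for the map built from $(\bar G,\bar\e,\g)$, I claim $(I-x)^\Phi = (I-\bar x)^{\bar\Phi}$. Indeed, $(\g f\otimes\e)(x) = (\g f)(x)\cdot\e(x) = \g(\bar x)\cdot\bar\e(\bar x) = (\g\otimes\bar\e)(\bar x)$, using $f(x)=\bar x$ and $\e=\bar\e f$. Let $\delta$ denote the common determinant $\det(I-x)^\Phi = \det(I-\bar x)^{\bar\Phi}\in\L$.

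Next I would invoke the \cite{ksw05} theorem, which yields $W_\g(\bar\A)\mid W_{\g f}(\A)$. Since by definition of Wada's invariant $D_\g(\bar\A) = \delta\,W_\g(\bar\A)$ and $D_{\g f}(\A) = \delta\,W_{\g f}(\A)$, multiplying through by $\delta$ immediately gives $D_\g(\bar\A)\mid D_{\g f}(\A)$ in $\L$, as desired.

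The main obstacle is a technical subtlety: since Wada's invariant is a priori a rational function, the divisibility from \cite{ksw05} strictly speaking lives in the fraction field of $\L$, so one must check that the quotient is a genuine polynomial after multiplication by $\delta$. Because $\L$ is a UFD and both $D_\g(\bar\A)$ and $D_{\g f}(\A)$ are polynomials, this poses no real difficulty; one passes from the fraction-field relation to an honest polynomial relation by observing that the common $\delta$ cancels on both sides. An alternative, self-contained route would be to reprove the divisibility directly at the matrix level: after choosing compatible presentations in which $f$ corresponds to a sequence of Tietze moves, $\mathcal{M}_\g(\bar\A)$ embeds as a block inside $\mathcal{M}_{\g f}(\A)$, and divisibility of the gcd of maximal minors then follows from standard linear algebra over the UFD $\L$.
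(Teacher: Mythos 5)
Your proposal is correct and matches the paper's (very terse) argument: the authors simply invoke the divisibility results of \cite{ksw05} for Wada's invariant and rely on the fact that the denominator $\det(I-x)^\Phi$ depends only on the conjugacy class of $x$, which is preserved since $f(x)=\bar x$ and $\e=\bar\e f$. You have merely spelled out the cancellation of the common factor $\delta$ that the paper leaves implicit.
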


\begin{example} If $k$ is an oriented knot with group $\pi_k$, then there 
is a well-defined epimorphism $\e: \pi_k \to \Z = \<t\mid \>$ taking each oriented meridian to $t$.  We associate to $k$ an augmented group system $\A_k = (\pi_k, \e, x)$, where $x$ represents the class of an oriented meridian. If $\g: \pi \to {\rm GL}_N \C$, then $D_\g(\A_k)$ agrees with the invariant originally defined by Lin \cite{lin01}. Lin used a Seifert surface $S$ that is {\it free} in the sense that ${\cal S}^3$ split along $S$ is a handlebody $H$ with fundamental group $\<a_1, \ldots, a_{2g}\mid \>$. He then described the knot group $\pi_k$ as 
$$\<x, a_1, \ldots, a_{2g}\mid x u_ix^{-1}  = v_i\  (i =1, \ldots, 2g)\>,$$
where $u_i, v_i$ are words in $a_1^{\pm 1}, \ldots, a_{2g}^{\pm 1}$ describing positive and negative push-offs of simple closed curves representing a basis for $\pi_1S$. Lin's invariant is the determinant of the matrix $M_\g$ computed as above.   

\end{example}

The following is a group-theoretic version of 3-manifold fibering results proved in different degrees of generality by Cha \cite{cha03}, Kitano and Morifuji
\cite{km05}, Goda, Kitano and Morifuji \cite{gkm05}, Friedl and Kim \cite{fk06}, Pajitnov \cite{paj07}, and Kitayama \cite{kit08}.

\begin{theorem}\label{fibering} Let $\A= (G, \e, x)$ be an augmented group system with $d=1$ and $\e(x)=t$. Assume such that the kernel of $\e:G \to \Z$ is a finitely generated free group of rank $n$. Then for any representation $\g: \pi \to {\rm G}_N R$, 
\item{1.} $\deg D_\g(\A) = nN$
\item{2.} the leading and trailing coefficients of $D_\g(\A)$ are units.
\end{theorem}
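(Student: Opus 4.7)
The plan is to exploit the hypothesis that $\ker\e\cong F_n$. Since $\e(x)=t$ generates $\Z$, the cyclic subgroup $\<x\>$ intersects $\ker\e$ trivially and surjects onto $\Z$, so $G$ decomposes as an internal semidirect product $F_n \rtimes \<x\>$. Letting $\phi\colon F_n\to F_n$ be the automorphism given by conjugation by $x$ and writing $\phi(a_i) = w_i \in F_n$, we obtain a presentation
$$G = \<x,\ a_1,\ldots,a_n \mid x a_i x^{-1} = w_i,\ i=1,\ldots,n\>.$$
In this presentation $m=n$, so ${\cal M}_\g$ is a square $nN\times nN$ matrix over $\L$, and up to a unit $D_\g(\A) = \det {\cal M}_\g$.

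Next I would compute the Fox derivatives of the relators $r_i = x a_i x^{-1} w_i^{-1}$. A direct application of the product rule, combined with the facts that $\Phi(r_i)=I$ (since $r_i$ is a relator and $\Phi$ factors through $\Z[G]$) and that $\e(w_i)=1$ (since $w_i\in\ker\e$), yields
$$\Phi\!\left(\frac{\partial r_i}{\partial a_j}\right) = tX_0\,\delta_{ij} - \Phi\!\left(\frac{\partial w_i}{\partial a_j}\right),$$
where $X_0 = \g(x)$. Writing $B$ for the $nN\times nN$ block matrix whose $(i,j)$-block is $\Phi(\partial w_i/\partial a_j)$, and $D$ for the block-diagonal matrix with $X_0$ in each diagonal block, this reads ${\cal M}_\g = tD - B$; note that $B$ is independent of $t$. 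Factoring out $D$ gives
$$D_\g(\A) = (\det X_0)^n \cdot \det(tI - D^{-1}B),$$
and the right-hand factor is a monic characteristic polynomial of degree $nN$ in $t$ with constant term $(-1)^{nN}\det(D^{-1}B) = (-1)^{nN}\det B/(\det X_0)^n$.

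Part (1) is then immediate. The leading coefficient of $D_\g(\A)$ is $(\det X_0)^n$, a unit in $R$ since $\g(x)\in{\rm GL}_N R$. The trailing coefficient simplifies to $(-1)^{nN}\det B$, so the crux of the argument---and the main obstacle---is to show $\det B \in R^\times$. Here freeness of $\ker\e$ is decisive: the map $\phi\colon F_n\to F_n$, $a_i\mapsto w_i$, is an automorphism of a free group (its inverse is conjugation by $x^{-1}$), so by a classical result in Fox calculus (essentially due to Birman) the Jacobian $J(\phi) = (\partial w_i/\partial a_j)$ is invertible as a matrix over $\Z[F_n]$. Applying the ring homomorphism $\Phi$ entrywise to both $J(\phi)$ and its inverse shows that $B = \Phi(J(\phi))$ is invertible in $M_{nN}(R)$, so $\det B$ is a unit of $R$ and the proof is complete.
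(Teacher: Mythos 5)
Your proposal is correct and follows essentially the same route as the paper: decompose $G$ as $F_n\rtimes\<x\>$, write ${\cal M}_\g$ as $t\,{\rm diag}(X,\ldots,X)$ minus the $\Phi$-image of the Fox Jacobian of the conjugation automorphism, and invoke Birman's inverse function theorem to see that the Jacobian is invertible over $\Z[F_n]$, hence its $\Phi$-image has unit determinant. The only cosmetic difference is that you factor out the block-diagonal matrix to exhibit a characteristic polynomial, while the paper reads off the leading and constant terms directly.
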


\begin{proof}  Let $a_1, \ldots, a_n$ be generators for the kernel of $\e$. Since $G$ is a semi-direct product of $\Z \cong \<x \mid \>$ and  the free group $F= \<a_1, \ldots, a_n \mid \>$, it has a presentation of the form 
$$G= \<x, a_1, \ldots, a_n \mid x a_1 x^{-1} = \mu(a_1), \ldots, \ x a_n x^{-1} = \mu(a_n)\>,$$
for some automorphism $\mu$ of  $F$.  The 
matrix ${\cal M}_\g$ in the definition of $D_\g(\A)$ is equal to
$${\cal M}_\g = t\  {\rm diag}(X, \ldots, X) - \bigg( {{\partial \mu(a_i)}\over {\partial a_j}}\bigg)^\Phi,$$
where $X = \g(x)$. It follows that the leading and constant terms of $D_\g(\A)=\det {\cal M}_\g$ are, respectively,
$$ t^{nN}(\det X)^n, \quad  \pm \det \bigg( {{\partial \mu(a_i)}\over {\partial a_j}}\bigg)^\Phi.$$
It suffices to show that the constant term is a unit.
By a theorem of Birman \cite{birman73}, elements $y_1, \ldots, y_n$ in $F$ generate if and only if the $n\times n$ Jacobian matrix with $ij$th entry ${\partial y_i}/ {\partial x_j}$ has a right inverse in $M_n(\Z[F])$. Hence
$$J=\bigg( {{\partial \mu(a_i)}\over {\partial a_j}}\bigg)$$
has a right inverse $B\in M_n(\Z[F])$. Replacing each occurrence of $a_i^{\pm 1}$ in $B$ by the matrix $A_i^{\pm 1}$, for $i=1, \ldots, n$, produces a right inverse for $J^\Phi$. Hence its determinant is a unit. 
\end{proof} 

\begin{example} The group $\pi_k$ of the trefoil has presentation 
$$\<x, a \mid x^2a x^{-2} \cdot x a^{-1} x^{-1} \cdot a \>.$$
A presentation for the commutator subgroup $\pi_k'$ can easily be found. Denoting $x^j a x^{-j}$ by $a_j$, 
$$\pi_k' =\< a_j \mid a_{j+2} a_{j+1}^{-1} a_j\ (j\in \Z) \>.$$
From this one sees that $\pi_k' \cong \<a_0, a_1 \mid\>$ is a finitely generated free group. 

Consider the related group $G$ with presentation

$$G= \<x, a \mid x^2a x^{-2}\cdot a^2 \cdot x a^{-1} x^{-1} \cdot a^{-1} \>.$$

The $\Z[t^{\pm 1}]$-modules $\pi'_k/\pi_k''$ and  $G'/G''$ are isomorphic. In particular, the Alexander polynomial is $t^2-t+1$ in both cases. However, using Magnus's Freiheitsatz as in \cite{str75}, one sees that $G'$ is not a finitely generated free group. Theorem \ref{fibering} provides a more elementary way of seeing this. 

Consider the augmented group system $\A = (G, \e, x)$, where $\e: G \to \Z$ is the abelianization map sending $x$ to $1$. One checks that there exists a  representation $\g: G \to {\rm GL}_3(\Z)$ such that 
$$\g(x)= \begin{pmatrix} 1&0&0\\ 0&0&1\\0&1&0 \end{pmatrix},\quad
\g(a)=  \begin{pmatrix} 0&0&1\\ 1&0&0\\0&1&0 \end{pmatrix}.$$ A simple calculation shows that 
$D_\g(\A) = (t^2-4)(t^2-1)(t^2-t+1)$. Since the constant term is not a unit, 
$G'$ is not a finitely generated free group. 

\end{example}

\section{Virtual knot example.}

Virtual knots were introduced by L. Kauffman  in 1997. They are described by diagrams that can have ``virtual crossings" (indicated by circles) as well as classical over/under crossings. A virtual knot is an equivalence class of diagrams under generalized Reidemeister moves. Details about virtual knots and their groups, a generalization of the classical notions, can be found in \cite{kauf01}. 

Many invariants of classical knots, such as the knot group and Jones polynomial, are defined in a natural way for virtual knots. Details can be found in \cite{kauf00}.

\begin{example} \label{virtual} Consider the virtual knot $k$ in Figure \ref{trefoil} with 
generators for a Wirtinger presentation indicated. In \cite{kauf00} Kauffman gave it as an example of a virtual knot with trivial Jones polynomial. We show that it also has a faithful representation with trivial Wada invariant. 

\begin{figure}
\begin{center}
\includegraphics[height=2 in]{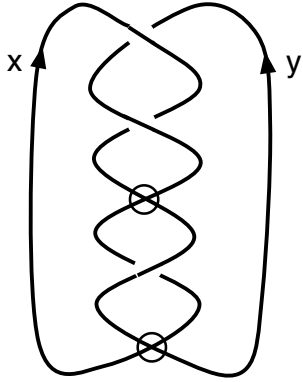}
\caption{Virtual knot and Wirtinger generators}
\label{trefoil}
\end{center}
\end{figure}

The group $\pi$ of $k$ has presentation
$$\pi= \< x, y \mid xyx=yxy, xy^{-1}x = yxy^{-1} \>.$$
Substituting $y = ax$ yields 
$$\pi = \< x, a \mid x^2 a x^{-2} = a^{-1} \cdot xax^{-1}, x a x^{-1} = a^2\>,$$
from which we can compute a presentation of the commutator subgroup: 
$$\pi' = \< a_j \mid a_{j+2} = a_j^{-1}a_{j+1}, a_{j+1} = a_j^2\ (j \in \Z)\>,$$ where $a_j = x^j a x^{-j}$. 
It follows immediately that $\pi' \cong \<a_0 \mid a_0^3\>$ is a cyclic group of order 3.
Hence $\pi$ is a semi-direct product of $\Z$ and $\Z/3\Z$: 
$$\pi = \<x, a \mid xa=a^2 x , a^3 =1\>.$$
Consider the representation $\g: \pi \to {\rm GL}_2\C$ sending $x$ and $a$ to 
$$X= \begin{pmatrix} 0&\a\\ \a&0 \end{pmatrix},\quad
A=  \begin{pmatrix} \omega&0\\ 0&\bar \omega \end{pmatrix},$$
where $\a$ is a nonzero complex number while $\omega$ is a primitive 3rd root of unity. If $\a$ is not a root of unity, then the matrix $X$ has infinite order, and the representation $\g$ is faithful. 
A straightforward calculation
shows that 
$${\cal M}_\g = \begin{pmatrix} tX-I -A\\ I+A+A^2 \end{pmatrix} = \begin{pmatrix} -\omega-1 & \a t\\  \a t & -\bar \omega-1\\ 0&0\\0&0\end{pmatrix}.$$
The Alexander-Lin polynomial is $D_\g = 1-\a^2 t^2$. Since this is the same as 
$\det (I- tX)$, the Wada invariant $W_\g$ is equal to 1. 
\end{example} 

\begin{remark}  In \cite{suz04}, M. Suzuki showed that the braid group $B_4$ with the faithful Lawrence-Krammer representation also has trivial  Wada invariant. The group in Example \ref{trefoil} is well known to be isomorphic to the group of a 2-knot, the 2-twist-spun trefoil. However, neither $B_4$ nor the group in Example \ref{trefoil} has a presentation with deficiency 1 (that is, one more generator than the number of relators).
\end{remark}

\noindent {\bf Question:} Does there exist an infinite nonabelian group $G$ of deficiency 1 and abelianization $\Z$ together with a faithful representation $\g: G \to {\rm GL}_n\C$ such that the Wada invariant $W_\g$ is equal to 1? \bigskip

\end{document}